\documentclass[10pt]{article}
\usepackage[margin=1.1in]{geometry}
\usepackage[utf8]{inputenc}
\usepackage{amsmath}
\usepackage{amssymb}
\usepackage{amsthm}
\usepackage{hyperref}
\usepackage{cleveref}
\usepackage{bm}
\usepackage{tikz}
\usepackage{graphicx}
\usepackage{xcolor}
\usepackage{subcaption}
\usepackage{url}
\usepackage{asymptote}
\usepackage{mathtools}

\usepackage[utf8]{inputenc}
\usepackage{pgfplots}
\pgfplotsset{compat=1.18}

\usepackage[font=small,labelfont=bf]{caption}

\newcommand{\nomatch}{\textcolor{white}{------}}
\newcommand{\blank}{\textcolor{white}{---}}

\newcommand{\ra}{$\rightarrow$}
\newcommand{\la}{$\leftarrow$}

\newtheorem{theorem}{Theorem}

\newtheorem{proposition}[theorem]{Proposition}
\newtheorem{lemma}[theorem]{Lemma}

\usepackage{xcolor}
\newcommand{\Comments}{1}
\newcommand{\mynote}[3]{\ifnum\Comments=1\textcolor{#1}{#2: #3}\fi}

\usepackage[round]{natbib}
\bibliographystyle{abbrvnat}

\hypersetup{
    colorlinks,
    linkcolor={red!50!black},
    citecolor={blue!50!black},
    urlcolor={blue!80!black}
}

\title{Stable Tables}
\author{Kenny Peng\footnote{I first heard about seating arrangement problems through an episode of \textit{Planet Money}, whose creators I credit for this paper's title \citep{beras2023thanksgiving}. I would especially like to thank Jon Kleinberg for pointing me to the paper of \cite{flory1939intramolecular}, which implies a result for the alternate solution concept of randomized greedy matching (rather than stable matching). I would also like to thank the many people who have entertained my discussion of the present work while seated next to me.}}
\date{}

\begin{document}

\maketitle

\begin{abstract}
    We consider equilibrium one-on-one conversations between neighbors on a circular table, with the goal of assessing the likelihood of a (perhaps) familiar situation: sitting at a table where both of your neighbors are talking to someone else. When $n$ people in a circle randomly prefer their left or right neighbor, we show that the probability a given person is unmatched in equilibrium (i.e., in a stable matching) is
    $$\frac{1}{9} + \left(\frac{1}{2}\right)^n\left(\frac{2n}{3} - \frac{8}{9} + \frac{2}{n}\right)$$
    for odd $n$ and
    $$\frac{1}{9} - \left(\frac{1}{2}\right)^n\left(\frac{2n}{3} - \frac{8}{9}\right)$$
    for even $n$.
    This probability approaches $1/9$ as $n\rightarrow \infty$. We also show that the probability \textit{every} person is matched in equilibrium is $0$ for odd $n$ and $\frac{3^{n/2}-1}{2^{n-1}}$ for even $n$.
\end{abstract}

\section{Introduction}
Perhaps you have sat at a table, or stood around at a party, where both of the people next to you are talking with someone else, leaving you alone to idle awkwardly. To make sense of this phenomenon, we consider a model in which each person at a circular table randomly prefers one of their two neighbors---left with probability $1/2$ and right with probability $1/2$. We then consider the equilibrium set of one-on-one conversations between neighbors (i.e., a stable matching), and determine the probability that a person is unmatched in the resulting equilibrium. (We will show that almost all configurations of preferences admit a unique stable matching.) \Cref{thm1} below provides the likelihood that a person is unmatched for all tables of size $n\ge 1.$

\begin{theorem}\label{thm1}
    The probability $f(n)$ that a given person is unmatched at a table with $n$ people is
    \begin{equation}
        \frac{1}{9} + \left(\frac{1}{2}\right)^n\left(\frac{2n}{3} - \frac{8}{9} + \frac{2}{n}\right)
    \end{equation}
    when $n$ is odd, and
    \begin{equation}
        \frac{1}{9} - \left(\frac{1}{2}\right)^n\left(\frac{2n}{3} - \frac{8}{9}\right)
    \end{equation}
    when $n$ is even.
\end{theorem}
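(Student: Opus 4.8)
The plan is to reduce \Cref{thm1} to a computation of the expected number of unmatched people. Because the distribution over preference profiles is invariant under rotating the circle, every person is unmatched with the same probability, so $f(n)=\tfrac1n\,\EE[U]$, where $U$ is the number of people left unmatched in equilibrium. I would therefore determine, for a fixed profile of left/right preferences, exactly which people are unmatched, and then recover $\EE[U]$ by linearity.

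First I would pin down the (essentially unique) stable matching. Encode a profile as a cyclic word in $\{R,L\}$, where $R$ (resp.\ $L$) means a person points to their right (resp.\ left) neighbor. A position with $x_i=R,\ x_{i+1}=L$ is a mutual top choice, and such an $RL$-pair lies in every stable matching (else it blocks); these forced pairs share no vertex, so they form a partial matching. Deleting them cuts $C_n$ into arcs, each straddling exactly one $LR$-junction ($x_i=L,\ x_{i+1}=R$) and consisting of a block of left-pointers followed by a block of right-pointers, all pointing \emph{away} from the junction, whose two extreme endpoints have lost their preferred neighbor to a forced pair. On such an arc the matching is again forced: each extreme endpoint must pair with its only remaining neighbor, and pairing propagates inward, so an arc with blocks of sizes $p$ and $q$ leaves exactly $[p+q\text{ odd}]$ people unmatched (when both blocks have an odd survivor, the two survivors sit across the $LR$-junction and must match each other). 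Writing the $j$-th segment as $R^{a_j}L^{b_j}$, this gives
\begin{equation}
U \;=\; \sum_{j}\bigl[\,a_{j+1}+b_j \text{ odd}\,\bigr],
\end{equation}
one term per $LR$-junction. I expect this to be the main obstacle: I must check that assembling the forced $RL$-pairs with the per-arc matchings yields a \emph{globally} stable matching --- the delicate case being the $LR$-junction edges, which block precisely when \emph{both} endpoints would otherwise be unmatched, exactly the parity case the construction rules out --- and that the result is unique for all but the two monochromatic profiles.

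Next I would treat the monochromatic profiles (all $R$ or all $L$) separately: for even $n$ they admit a perfect matching ($U=0$), while for odd $n$ no stable matching exists and, by the profile's own rotational symmetry, the single leftover seat is uniform, so the natural convention gives $U=1$. These two profiles have no $LR$-junction, so they are invisible to the junction sum and enter only as an additive correction of $2\,[n\text{ odd}]$ to $\sum_{\text{profiles}}U$.

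Finally I would compute $\EE[U]$ by linearity over the $n$ gaps of the circle. By rotational symmetry $\EE[U]$ equals $n$ times the probability that a fixed gap is an $LR$-junction with $a+b$ odd, where $a,b$ are the lengths of the right-block just after and the left-block just before the gap. On an infinite line $a,b$ are independent with $\Pr[\,\cdot=k\,]=2^{-k}$, so $\Pr[\text{odd}]=\tfrac23$, $\Pr[\text{even}]=\tfrac13$, whence $\Pr[a+b\text{ odd}]=\tfrac49$ and the leading term $\tfrac14\cdot\tfrac49=\tfrac19$. The corrections are finite-size effects: the blocks cannot overlap, so $a+b\le n$, the value $a+b=n-1$ is impossible, and $a+b=n$ is the single-segment profile $R^aL^b$. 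Carrying out the exact cyclic sum
\begin{equation}
f(n)=\sum_{\substack{2\le s\le n-2\\ s\text{ odd}}}(s-1)\,2^{-(s+2)}\;+\;(n-1)\,2^{-n}\,[n\text{ odd}]\;+\;\frac{2\,[n\text{ odd}]}{n\,2^{n}},
\end{equation}
and collapsing the geometric-type sum (whose infinite tail contributes $\tfrac19$ and whose truncation contributes the $2^{-n}\,\mathrm{poly}(n)$ terms) should produce the two claimed closed forms after splitting on the parity of $n$. I would check the algebra against the small cases $f(3)=\tfrac13$, $f(4)=0$, $f(5)=\tfrac15$, $f(6)=\tfrac1{16}$, which already pin down the formula.
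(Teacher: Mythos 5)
Your proposal is correct and arrives at exactly the right formulas (it reproduces the paper's closed forms and the small cases $f(3)=\tfrac13$, $f(5)=\tfrac15$, $f(6)=\tfrac1{16}$), but the counting is organized along a genuinely different route than the paper's. The structural core is necessarily shared: your description of the forced matching---natural $RL$ pairs must match, each arc between consecutive natural pairs has the form $L^pR^q$, pairing propagates inward from the arc's ends, and the arc contributes exactly $[\,p+q\text{ odd}\,]$ unmatched people---is in substance the paper's \Cref{lem:unmatched-conditions}, which phrases the same analysis via ``zones.'' The divergence is in how this structure is converted into a probability. The paper uses the per-person criterion (\Cref{lem:unmatched-conditions}(i)): person $i$ is unmatched iff the distances $s(i),t(i)$ to the nearest $R$ on the left and nearest $L$ on the right are both even, and it then sums $\Pr[s(i)=2c,\,t(i)=2d]=2^{-2(c+d)}$ over admissible $(c,d)$. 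You instead use rotational symmetry and linearity of expectation over the $n$ edges: $f(n)=\EE[U]/n$ equals the probability that a fixed gap is an $LR$-junction whose two adjacent maximal runs have odd total length. The two computations are equivalent---substituting $s=2(c+d)-1$ turns your sum $\sum_{s\text{ odd}}(s-1)2^{-(s+2)}$ into the paper's double sum $\sum_{k}(k-1)4^{-k}$, and your boundary terms $(n-1)2^{-n}[n\text{ odd}]$ and $2[n\text{ odd}]/(n2^n)$ are exactly the paper's overlap term and irregular-profile term for odd $n$---so what your route buys is interpretability and uniformity: the limit $1/9=\tfrac14\cdot\tfrac49$ appears transparently as junction density times odd-parity probability, and the odd and even cases differ only in which boundary terms survive, rather than requiring the separate appendix calculation the paper performs. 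One small caution: your blanket claim that ``each extreme endpoint must pair with its only remaining neighbor'' is not a valid forcing step when a block has size one, since that person is precisely the potential survivor and may end up unmatched; the correct propagation, as in the paper, starts from the fact that an endpoint of a block of size at least two is \emph{preferred} by its inward neighbor and hence must be matched, with size-one blocks resolved by your junction-edge observation. Since your parenthetical on the junction edge and the final count $[\,p+q\text{ odd}\,]$ handle this correctly, and you explicitly flag this assembly as the step needing verification, this is a wording slip rather than a gap.
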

$f(n)$ is formally defined in \Cref{sec:prob-unmatched}. As $n$ grows large, the probability approaches $1/9$ (surprisingly small, perhaps). Results for smaller $n$ are given in \Cref{fig:thm1}.

We provide some high-level intuition. There is a $1/2$ chance that your left neighbor prefers not to talk to you, and a $1/2$ chance that your right neighbor prefers not to talk to you. Therefore, there is a $1/4$ chance neither of your neighbors want to talk to you. But even in this case, not all hope is lost; maybe the person on your left prefers not to talk to you, but the person they want to talk to is busy talking to someone else. So---luckily for you---they turn back in your direction. It turns out that the probability that your neighbor both prefers not to talk to you \textit{and has a better option} is about $1/3,$ meaning that you only have a roughly $1/9$ chance of being unmatched. 

\begin{figure}
\begin{tikzpicture}
\begin{axis}[title=\Cref{thm1}: Probability $f(n)$ of being unmatched at a table of $n$ people, xlabel=$n$ (\# of people),
  ylabel=probability alone, width=\linewidth,height=7cm, xmin=0.7, xmax=12.3, ymin=-0.025, ymax=1.15]
  \addplot[mark=none, purple, thick, dashed] coordinates {(0.7,0.111) (12.3,0.111)};
  \addlegendentry{$\frac{1}{9}$ probability in limit}
  \addplot[mark=none, lightgray] coordinates {(0.7,0) (12.3,0)};
  \addplot[mark=none, lightgray] coordinates {(0.7,0.2) (12.3,0.2)};
  \addplot[mark=none, lightgray] coordinates {(0.7,0.4) (12.3,0.4)};
  \addplot[mark=none, lightgray] coordinates {(0.7,0.4) (12.3,0.4)};
  \addplot[mark=none, lightgray] coordinates {(0.7,0.6) (12.3,0.6)};
  \addplot[mark=none, lightgray] coordinates {(0.7,0.8) (12.3,0.8)};
  \addplot[mark=none, lightgray] coordinates {(0.7,1) (12.3,1)};
  \addplot[mark=none, black] coordinates {(0.7,0) (12.3,0)};
    \addplot[
        scatter/classes={a={red}, b={blue}},
        scatter, mark=*, only marks, 
        scatter src=explicit symbolic,
        nodes near coords*={\Label},
        visualization depends on={value \thisrow{label} \as \Label} %
    ] table [meta=class] {
        x y class label
        1 1 a $1$
        2 0 b $0$
        3 0.333 a $\frac{1}{3}$
        4 0 b $0$
        5 0.2 a $\frac{1}{5}$
        6 0.0625 b $\frac{1}{16}$
        7 0.1428 a $\frac{1}{7}$
        8 0.09375 b $\frac{3}{32}$
        9 0.1215 a $\frac{35}{288}$
        10 0.1055 b $\frac{27}{256}$
        11 0.1143 a $\frac{161}{1408}$
        12 0.1094 b $\frac{7}{64}$ 
    };
\end{axis}
\end{tikzpicture}
\caption{The probability of being alone at a table of $n$ people, for $1\le n\le 12.$ Red points mark odd-sized tables, and blue points mark even-sized tables. Notice that the probability quickly approaches $\frac{1}{9}$ as $n$ increases, though for small $n$ the probability can deviate significantly from the asymptotic value. The probability is always above $\frac{1}{9}$ for odd $n$ and below $\frac{1}{9}$ for even $n$, a fact intuitively explained by the necessity of someone being unmatched when there is an odd number of people.}
\label{fig:thm1}
\end{figure}
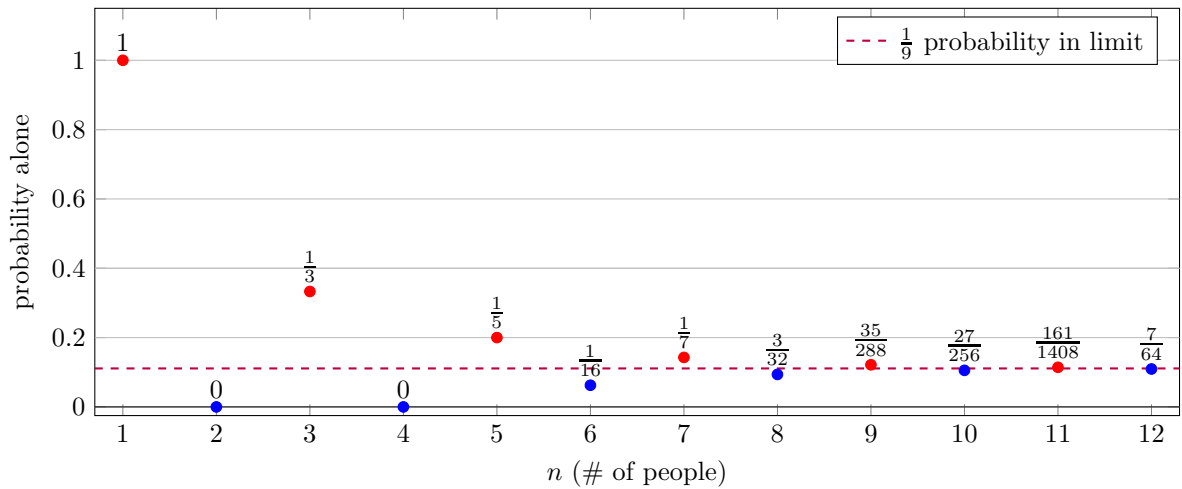

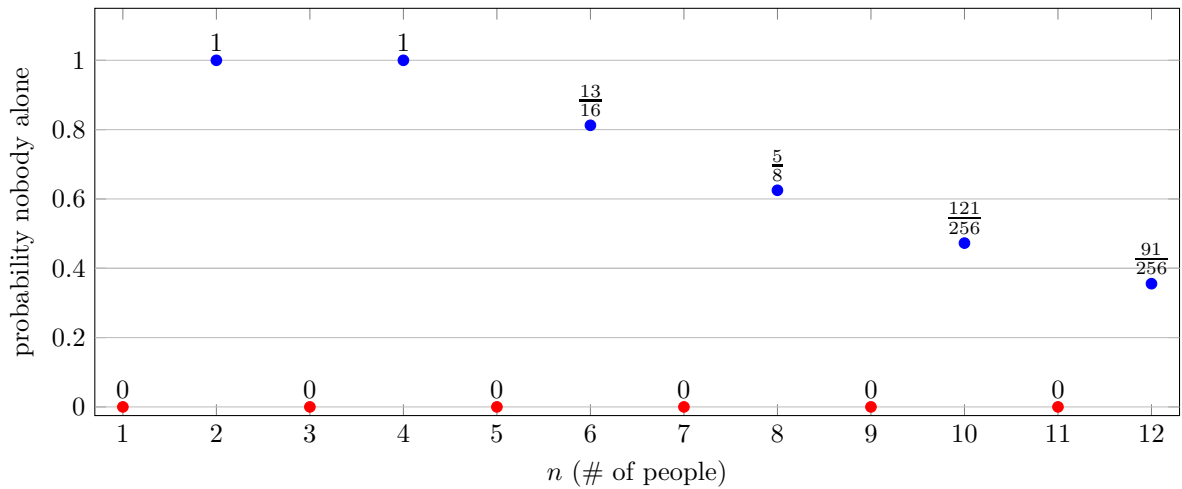
\begin{figure}
\begin{tikzpicture}
\begin{axis}[title=\Cref{thm2}: Probability $g(n)$ that nobody is unmatched at a table of $n$ people, xlabel=$n$ (\# of people),
  ylabel=probability nobody alone, width=\linewidth,height=7cm, xmin=0.7, xmax=12.3, ymin=-0.025, ymax=1.15]
  \addplot[mark=none, lightgray] coordinates {(0.7,0) (12.3,0)};
  \addplot[mark=none, lightgray] coordinates {(0.7,0.2) (12.3,0.2)};
  \addplot[mark=none, lightgray] coordinates {(0.7,0.4) (12.3,0.4)};
  \addplot[mark=none, lightgray] coordinates {(0.7,0.4) (12.3,0.4)};
  \addplot[mark=none, lightgray] coordinates {(0.7,0.6) (12.3,0.6)};
  \addplot[mark=none, lightgray] coordinates {(0.7,0.8) (12.3,0.8)};
  \addplot[mark=none, lightgray] coordinates {(0.7,1) (12.3,1)};
    \addplot[
        scatter/classes={a={red}, b={blue}},
        scatter, mark=*, only marks, 
        scatter src=explicit symbolic,
        nodes near coords*={\Label},
        visualization depends on={value \thisrow{label} \as \Label} %
    ] table [meta=class] {
        x y class label
        1 0 a $0$
        2 1 b $1$
        3 0 a $0$
        4 1 b $1$
        5 0 a $0$
        6 0.8125 b $\frac{13}{16}$
        7 0 a $0$
        8 0.625 b $\frac{5}{8}$
        9 0 a $0$
        10 0.4726 b $\frac{121}{256}$
        11 0 a $0$
        12 0.3555 b $\frac{91}{256}$ 
    };
\end{axis}
\end{tikzpicture}

\caption{The probability that everybody is matched at a table of $n$ people, for $1\le n\le 12.$ Red points mark odd tables, and blue points even tables. Clearly, the probability is only non-zero for even $n$. While the probability decays exponentially, for small even $n$, the probability is reasonable.}
\label{fig:thm2}

\end{figure}

\paragraph{}
We then consider the probability that \textit{no} person at a table is left alone in a stable matching. Clearly, this is only possible when there an even number of people.

\begin{theorem}\label{thm2}
    The probability $g(n)$ that every person is matched at a table with an even number $n$ people is
    \begin{equation}
        \frac{3^{n/2}-1}{2^{n-1}}.
    \end{equation}
\end{theorem}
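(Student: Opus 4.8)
The plan is to reduce the computation to counting preference configurations for which a perfect matching is stable, and then carry out that count directly. Write $x_i \in \{L,R\}$ for the preference of person $i$, with $R$ meaning they prefer their right neighbor $i+1$ and $L$ their left neighbor $i-1$, and read indices modulo $n$. The first observation is that the cycle $C_n$ with $n$ even has exactly two perfect matchings, $M_0 = \{(2k-1,2k) : 1\le k\le n/2\}$ and $M_1 = \{(2k,2k+1) : 1\le k\le n/2\}$. Since the set of people left unmatched is the same across all stable matchings (and, by the earlier discussion, the stable matching is generically unique anyway), ``everyone is matched'' is equivalent to the existence of a perfect stable matching, which is in turn equivalent to $M_0$ or $M_1$ being stable. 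So I would aim to show that $\#\{x : M_0 \text{ or } M_1 \text{ stable}\} = 2\cdot 3^{n/2} - 2$, after which dividing by $2^n$ gives the claimed $g(n)$.

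Next I would determine exactly when $M_0$ is stable. Because only adjacent pairs can block and everyone is matched under $M_0$, the only candidate blocking pairs are the non-matching edges $(2k,2k+1)$. Such an edge blocks precisely when person $2k$ prefers their right neighbor and person $2k+1$ prefers their left neighbor, i.e.\ when $(x_{2k},x_{2k+1}) = (R,L)$. Hence $M_0$ is stable iff $(x_{2k},x_{2k+1}) \ne (R,L)$ for every $k$. The key simplification is that these constraints live on the disjoint coordinate pairs $\{x_2,x_3\},\{x_4,x_5\},\ldots,\{x_n,x_1\}$, which partition all $n$ coordinates; each pair independently has $3$ of its $4$ assignments allowed, so exactly $3^{n/2}$ configurations make $M_0$ stable. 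By the rotational symmetry $x_i \mapsto x_{i+1}$, which swaps $M_0$ and $M_1$, the same count $3^{n/2}$ holds for $M_1$.

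Finally I would handle the overlap by inclusion--exclusion. A configuration makes both $M_0$ and $M_1$ stable iff $(x_j,x_{j+1}) \ne (R,L)$ at every one of the $n$ boundaries of the cycle, since the two matchings together forbid the pattern $RL$ everywhere. A cyclic $L/R$ string containing both letters must contain an $RL$ adjacency, so this happens exactly for the two monochromatic configurations (all $L$ or all $R$), each of which indeed admits a perfect stable matching. Therefore the count is $3^{n/2} + 3^{n/2} - 2$, and $g(n) = (2\cdot 3^{n/2} - 2)/2^n = (3^{n/2}-1)/2^{n-1}$.

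The main obstacle is getting the stability characterization exactly right---correctly translating the left/right preferences into the blocking pattern $(R,L)$ and confirming that no non-adjacent or already-matched pair can block---together with the bookkeeping in the overlap term. The disjointness of the $n/2$ constraint pairs is what collapses the count to a clean power of $3$, so I would take care to verify that the forbidden patterns really do sit on disjoint coordinates that wrap correctly around the cycle. I would also double-check the reduction step (existence of a perfect stable matching $\Leftrightarrow$ everyone matched) against the uniqueness discussion, noting that the monochromatic configurations are precisely where multiple perfect matchings become simultaneously stable.
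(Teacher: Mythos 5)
Your proposal is correct, and it reaches the count $2\cdot 3^{n/2}-2$ by a genuinely different route than the paper. The paper invokes its Lemma~\ref{lem:unmatched-conditions}(ii) (the zone/natural-pair analysis): the unique stable matching for regular $\pi$ is perfect if and only if there is an even number of seats between consecutive natural pairs, and it then counts such ``even-spaced'' strings by partitioning into three disjoint classes (the two irregular strings, regular strings with all $RL$ occurrences starting at odd positions, and regular strings with all $RL$ occurrences starting at even positions), giving $2+(3^{n/2}-2)+(3^{n/2}-2)$. You instead bypass that lemma entirely: you use the graph-theoretic fact that an even cycle has exactly two perfect matchings $M_0, M_1$, reduce ``everyone matched'' to ``$M_0$ or $M_1$ is stable'' (which is legitimate under the paper's definition of $g(n)$, using uniqueness for regular $\pi$ from Proposition~\ref{prop:stable}(i) and the convention plus Proposition~\ref{prop:stable}(ii) for irregular $\pi$), characterize stability of each matching by forbidding the blocking pattern $(R,L)$ on its $n/2$ disjoint non-matching edges, and finish by inclusion--exclusion, $3^{n/2}+3^{n/2}-2$. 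The combinatorial core---three allowed assignments on each of $n/2$ disjoint adjacent pairs, with the two monochromatic strings as the degenerate overlap---is the same in both arguments, but your derivation of the stability criterion is more elementary and self-contained for this particular theorem, whereas the paper's route reuses machinery (the zone decomposition) that it needs anyway for Theorem~\ref{thm1}. One small point to make airtight: your parenthetical appeal to the invariance of the set of unmatched people across stable matchings is the rural hospitals theorem, which applies here only because $n$ even makes the cycle bipartite; but as noted, the reduction already follows from uniqueness for regular $\pi$, so nothing essential rests on it.
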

$g(n)$ is formally defined in \Cref{sec:prob-no-unmatched}. This probability decays exponentially fast---it is $O(0.867^{n})$---but is reasonably large for small $n$. For example, it is greater than $\frac{1}{2}$ for tables of size $2, 4, 6,$ and $8$ (see \Cref{fig:thm2}).

Showing \Cref{thm2} relies on the following characterization of when every person is matched. Define a \textit{natural pair} to be any two neighbors who mutually prefer each other. Then, we show that everyone is matched at a table if and only if there is an even number of people seated between any two consecutive natural pairs. (The only-if direction is clear: in equilibrium, natural pairs must be matched to each other, so if there are an odd number of people between two natural pairs, then one person in this stretch must be left unmatched.) In this way, \Cref{thm2} reduces to counting the number of preferences that correspond to such a configuration.

The paper proceeds as follows. \Cref{sec:related-work} describes related work in seat arrangement problems, matching markets with random preferences, and random matching algorithms. \Cref{sec:model} introduces the stable matching model. In \Cref{sec:analysis}, we prove several basic results about stable matching on a circle---in particular, demonstrating that there is almost always a unique stable matching, and characterizing when a given person is unmatched. In \Cref{sec:prob-unmatched}, we formalize and prove \Cref{thm1}. In \Cref{sec:prob-no-unmatched}, we formalize and prove \Cref{thm2}. Then, in \Cref{sec:random-matching}, we briefly compare these results with those arising from an alternate solution concept, randomized greedy matching \citep{dyer1991randomized}, in which matches are sequentially chosen uniformly at random. In particular, under randomized greedy matching, a result shown by \cite{flory1939intramolecular} implies the probability an individual on a circular table is unmatched approaches $1/e^2>1/9$ for large $n$. In \Cref{sec:conclusion}, we briefly conclude.

\section{Related work}\label{sec:related-work} 

Problems related to seating arrangements and stability have received recent attention. In particular, a number of papers study \textit{hedonic seat arrangement problems} in which a person's utility is given by the sum of their cardinal utilities over their neighbors \citep{bodlaender2020hedonic, ceylan2023optimal, berriaud2023stable}. One interesting question in this line of work considers when a swap-stable seating arrangement exists---one in which no two people prefer to swap positions---and how to compute such an arrangement. While this line of work considers stability with respect to where people are seated (i.e., the seating arrangement), we focus on stability with respect to which conversations form. In particular, we do not allow individuals to change seats, but rather allow individuals to choose which neighbor to converse with. (We also consider ordinal rather than cardinal preferences.) While we do not focus on how to choose a seating arrangement, one may pose an related question in the model we consider: Given the preferences of $n$ people, can one always seat them at a circular table such that every person ends up in a conversation in equilibrium? When $n$ is even and all people have full preference lists, the answer is yes, and can be shown with a short argument using standard stable matching theory.\footnote{Split the $n=2m$ people arbitrarily into two groups of $m$, compute a stable matching between these two groups (which must exist by \cite{gale1962college}). This matching must be a perfect matching since every person has a full preference list. Then seat the resulting matches (pairs of people) along the table such that people in the first group occupy even positions and people in the second group occupy odd positions. Then, in equilibrium, each person converses with their match, as any blocking pair along the table would contradict the stability of the original matching.}

Mathematically, the present work is more directly related to the substantial literature studying random matching markets---i.e., those in which participants have random preferences. For example, \cite{pittel1989average} studies the expected number of stable matchings in a one-to-one two-sided matching market with $n$ participants on each side, each of whom have uniformly random preferences; Pittel shows that there are approximately $e^{-1}n\log n$ stable matchings on average. \cite{ashlagi2017unbalanced} consider the same setup, but with $n$ participants on one side and $n+1$ on the other, finding instead a vanishing number of stable matchings. \cite{immorlica2003marriage} similarly demonstrate that there is a vanishing number of stable matchings when participants have short lists. As a consequence of our analysis (\Cref{prop:stable}), we contribute a basic result in this direction, providing a network topology (the cycle graph), under which almost all preference configurations result in a unique stable matching.

Like us, \cite{arnosti2023lottery} focuses on unmatched participants in random matching markets, comparing the number of unmatched students in many-to-one stable matching under different tie-breaking mechanisms. (This is a key statistic in school choice policy discussions \citep{abdulkadirouglu2009strategy,de2023performance}.) Unmatched students arise in Arnosti's setting due to the existence of incomplete preference lists. Our setting can be viewed as an extreme case in which participants all have preference lists of length exactly two (over their two neighbors). We note that our setting is an instance of standard one-to-one two-sided matching exactly when the size of the table is even, since even-sized cyclic graphs are bipartite.

Our work is also related to random matching algorithms on general graphs, in which matches are sequentially chosen at random. \cite{dyer1991randomized} provides bounds on the ratio between the average performance of ``randomized greedy matching'' in comparison to the maximum matching, including for restricted sets of graphs (like trees). \cite{dyer1993average} considers the same question, but for random sparse graphs. Randomized greedy matching can be viewed as an alternate solution concept under which to study our central problem: the probability that an individual is left conversation-less at a dinner table. A result of \cite{flory1939intramolecular} implies that randomized greedy matching matches approximately a $1-1/e^2$ proportion of individuals arranged along a large cycle graph, while \Cref{thm1} of the present work implies that a stable matching arising from random preferences matches a $8/9 > 1 - 1/e^2$ proportion of individuals. It would be interesting to compare the performance of stable matching with random preferences and greedy random matching on more general graphs. (Of course, any such analysis must deal with the possibility that there does not exist a stable match.)

The present work should not be confused with papers that consider the \textit{physical} stability (i.e., wobbliness) of four-legged tables \citep{martin2007stability, baritompa2005mathematical}. A remarkable set of results shows that as long as the local slope of the ground is not too significant (in fact, $<35.26$ degrees), a four-legged table with equal-length legs can always be rotated to a stable position.

\section{Model}\label{sec:model}

We introduce the model in three parts. First, we formalize a set of one-on-one conversations around a table as a \textit{matching}. Second, we formalize the \textit{preferences} of each person over their two neighbors. Finally, we introduce \textit{stability} \citep{gale1962college} as the solution concept from which to determine the matching that forms given a set of preferences. This provides a setup for analyzing what conversations arise given randomly instantiated preferences.

\paragraph{Matching around a table.} Consider $n$ people seated around a circular table, labeled $0, 1, 2, \cdots, n-1$. Then each person $i$ has two neighbors, $i-1$ (their left neighbor) and $i+1$ (their right neighbor), where arithmetic is modulo $n$. For example, in a six-person table, person $3$ has neighbors $2$ and $4$, and person $5$ has neighbors $4$ and $0$.

We model one-on-one conversations between neighbors around the table: each person converses with at most one other person, and they can only converse with their two neighbors. Formally, a \textit{matching} is a function $\mu:\{0,1,\cdots,n-1\}\rightarrow \{0,1,\cdots,n-1\},$ where $\mu(i)$ is to be interpreted as the person whom $i$ converses with. The function $\mu$ must therefore satisfy two properties:
\begin{enumerate}
    \item $\mu(i)\in \{i-1, i, i+1\}.$ In other words, a person is either matched to one of their two neighbors, or, if $\mu(i)=i$, they are unmatched.
    \item $\mu(i)=j$ implies $\mu(j)=i.$ In other words, conversations must be mutual.
\end{enumerate}
If $\mu(i)=i,$ we say that person $i$ is \textit{unmatched} in $\mu$. A primary goal of this paper is to analyze the likelihood that a person is unmatched. To answer this question, we must consider what matching forms at a table with $n$ people. We model this outcome as arising from the preferences of each person. 

\paragraph{Preferences over neighbors.}
We assume that each person strictly prefers their left or right neighbor. (And prefers both neighbors to being alone.)
The preferences of $n$ people around a table can therefore be represented as a string $\pi \in \Pi_n := \{L,R\}^n$, where the $i$-th character of the $n$-character string (which we index starting at $0$) is the preference of person $i$. For example, the string
\begin{equation}
    \pi = RRLR
\end{equation}
corresponds to a four-person table in which person $0$ prefers $1$, $1$ prefers $2$, $2$ prefers $1$, and $3$ prefers $0$. We may also illustrate this diagrammatically as
\begin{equation}
    \text{0\ra\blank 1\ra\la 2\nomatch 3\ra},
\end{equation}
where arrows indicate the direction of each person's preference.

We define a \textit{natural pair} to be two neighbors who mutually prefer each other. In the example above, 1 and 2 form a natural pair. Notice that a natural pair consists of an occurrence of the substring $RL$ in $\pi$, where this substring can ``wrap around'' the end of a string. For example, for $\pi=LLRR$, person $3$ and person $0$ form a natural pair. Observe also that $\pi$ contains a natural pair if and only if it contains at least one $R$ and at least one $L$. We call such $\pi$ \textit{regular}. Meanwhile, we say $\pi$ is \textit{irregular} if and only if it is not regular; in other words, $\pi$ is irregular if and only if it contains only $R$'s or only $L$'s.

We also say that a person is \textit{preferred} if at least one of their neighbors prefers them.

\paragraph{Stable matching.} Given preferences $\pi$, a matching $\mu$ is \textit{stable} if and only if there does not exist a \textit{blocking pair}, two neighbors $i$ and $i+1$ that:
\begin{enumerate}
    \item are not currently matched to each other, i.e., $\mu(i)\neq i+1$, and
    \item both prefer each other over their current match (or lack of match).
\end{enumerate}
In other words, a matching (i.e., set of one-on-one conversations) is stable if and only if no two neighbors would defect to converse with each other. A stable matching thus reflects an \textit{equilibrium} set of conversations. Returning to the example $\pi=RRLR$, the matching where $\mu(0)=1$ and $\mu(2)=3$ is not stable since $1$ and $2$ would defect to converse with each other. Indeed, this example conveys a basic insight: that natural pairs must be matched together in stable matchings.

\section{Analysis of stable matchings}\label{sec:analysis}
In this section, we show some basic results pertaining to stable matchings on a table. The key results in this section establish when stable matchings exist and are unique (\Cref{prop:stable}), as well as when a person is unmatched and when no person is unmatched (\Cref{lem:unmatched-conditions}). I begin by characterizing when a matching is stable with respect to a given set of preferences:

\begin{lemma}\label{lem:stable-conditions}
A matching $\mu$ is stable under preferences $\pi\in \Pi_n$ if and only if:
\begin{enumerate}
    \item all natural pairs are matched to each other,
    \item all preferred people are matched to someone,
    \item and no two neighbors are both unmatched.
\end{enumerate}
\end{lemma}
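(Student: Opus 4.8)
This is a characterization lemma, so I would establish that the three listed conditions are jointly necessary and sufficient for stability. The necessity direction (stable $\Rightarrow$ conditions) should follow fairly directly from the definition of a blocking pair, while the sufficiency direction (conditions $\Rightarrow$ stable) requires showing that no blocking pair can exist, which I expect to be the more delicate part.

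First I would prove necessity by contraposition on each condition. Suppose $\mu$ is stable. If some natural pair $i, i+1$ (neighbors who mutually prefer each other) were not matched to each other, then since each prefers the other to \emph{any} alternative, they would both prefer to defect, making them a blocking pair---contradiction. For the second condition, suppose a preferred person $j$ is unmatched, and let $i$ be a neighbor who prefers $j$; then $i$ prefers $j$ to being alone, and $j$ (being unmatched) prefers $i$ to being alone, so $\{i,j\}$ blocks. For the third condition, if two neighbors $i, i+1$ were both unmatched, each prefers the other to their current (empty) match, so they block. Thus all three conditions hold.

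For sufficiency, I would assume the three conditions hold and suppose toward contradiction that some blocking pair $i, i+1$ exists. By definition both prefer each other to their current match. The key observation is to case on the preference directions at $i$ and $i+1$. If both prefer each other, they form a natural pair, so by condition~1 they are already matched to each other, contradicting that they block. Otherwise at least one, say $i$, does not prefer $i+1$; for $i$ to strictly prefer $i+1$ over its current match, $i$ must currently be \emph{unmatched} (since $i$ prefers its actual favorite neighbor, and the only way $i+1$ is an improvement is if $i$ has nothing). But if $i$ prefers $i+1$ in the blocking sense while not naturally preferring $i+1$, then $i+1$ must be the neighbor $i$ does \emph{not} point to, meaning $i$ points to its other neighbor---so $i$ is a preferred-or-unmatched situation I would unwind carefully. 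The cleanest framing is: for the pair to block, each endpoint must strictly prefer the other to its current assignment, and I would show this forces each endpoint to be unmatched or to have a partner it values less, then derive a contradiction with conditions~2 and~3.

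\textbf{The main obstacle} will be handling the sufficiency direction cleanly, specifically the mixed case where the two members of a blocking pair do not mutually prefer each other. The subtlety is that a neighbor only prefers a non-preferred neighbor over its current match when it is currently unmatched, and I must track exactly when being unmatched is consistent with conditions~2 and~3. I would organize this as a short case analysis on whether each of $i, i+1$ prefers the other, showing that the "both prefer" case contradicts condition~1, and any case where at least one does not prefer the other forces an unmatched preferred person (contradicting condition~2) or two adjacent unmatched people (contradicting condition~3). Carefully enumerating these cases, and confirming they are exhaustive, is the part most prone to error, so I would state the preference configuration explicitly at each branch.
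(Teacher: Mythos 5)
Your proposal is correct and follows essentially the same route as the paper: necessity by exhibiting a blocking pair from each violated condition, and sufficiency via the same three-way case analysis on whether each member of a would-be blocking pair prefers the other (mutual preference contradicts condition 1; the one-sided case forces an unmatched preferred person, contradicting condition 2; the neither-prefers case forces two adjacent unmatched people, contradicting condition 3). The only cosmetic difference is that you frame sufficiency as a contradiction with a purported blocking pair, while the paper argues directly that no pair of non-matched neighbors would defect.
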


\begin{proof}
We begin with the only if direction. We show that $\mu$ is violated whenever any of the conditions are violated. Suppose condition $1$ were violated, so that there is a natural pair that is not matched to each other. Then $\mu$ is unstable since these two neighbors would defect to match with each other. Now suppose condition $2$ were violated, so that there is a preferred person $i$ who is not matched. Then one of their neighbors $j\in \{i-1,i+1\}$ prefers $i$. Then $i$ and $j$ would defect to be matched with each other. Finally, suppose condition $3$ were violated, so that there are two neighbors are both unmatched. Then these two neighbors would defect to be matched with each other.

We now show the if direction. We show that if $\mu$ satisfies these three conditions, then there do not exist two neighbors $i$ and $j$ that are not currently matched and who would defect to be matched with each other. We consider three cases: when $i$ and $j$ both prefer each other, when $i$ prefers $j$ but $j$ does not prefer $i$, and when neither $i$ and $j$ prefer each other. The first case is not possible when $\mu$ satisfies condition $1$. In the second case, since $j$ is preferred, $j$ must be matched by condition $2$. Since they are not matched to $i$, they are matched to their other neighbor, which is their preferred neighbor. Therefore, $j$ would not want to defect from their match to match with $i$. In the third case, neither $i$ or $j$ prefer each other. By condition $3$, at least one of them is matched. This person is matched to their preferred neighbor, so they would not defect. Therefore, any pair of neighbors who are not currently matched would not defect to be matched to each other, showing that $\mu$ must be stable.
\end{proof}

\begin{proposition}\label{prop:stable}
    For preferences $\pi\in \Pi_n$:
    \begin{enumerate}
        \item[(i)] If $\pi$ is regular, there is a unique stable matching.
        \item[(ii)] If $\pi$ is irregular and $n$ is even, a matching is stable if and only if no person is unmatched.
        \item[(iii)] If $\pi$ is irregular and $n$ is odd, there exists no stable matching.\footnote{Notice that since a cycle graph with an odd number of vertices is not bipartite, a stable matching is not guaranteed from Gale-Shapley.}
    \end{enumerate}
\end{proposition}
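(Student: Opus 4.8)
The plan is to prove the three cases separately, leaning heavily on the characterization in \Cref{lem:stable-conditions}. The structure of a regular $\pi$ is key: reading the string cyclically, the natural pairs correspond exactly to the occurrences of the substring $RL$ (with wraparound). First I would establish that for regular $\pi$ the natural pairs, once forced to be matched to each other by condition 1, carve the remaining seats into disjoint arcs, and on each arc the preferences are ``unidirectional'' in a way that forces every subsequent match.

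\textbf{Case (i), regular $\pi$.} The idea is that once all natural pairs are matched (condition 1), the rest of the matching is completely determined. Consider the seats strictly between two consecutive natural pairs (going clockwise). I claim that along such a stretch, reading in the clockwise direction, the preferences must be a block of $L$'s followed by a block of $R$'s cannot occur in the interior---more carefully, between an $RL$ occurrence and the next, the intervening characters, if we exclude the pair seats, contain no further $RL$, so they look like $L^a R^b$. On such a maximal run, every person points ``outward'' toward the already-matched pair on their side, so condition 2 (preferred people matched) together with condition 3 (no two adjacent unmatched) forces the matches to propagate uniquely inward from both ends: starting from the endpoints adjacent to the natural pairs, each preferred person must be matched to their unique available neighbor, and this cascade leaves at most one unmatched person in the middle of an odd-length stretch, with no freedom in the choice. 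I would show existence by constructing this matching and verifying it satisfies the three conditions of \Cref{lem:stable-conditions}, and uniqueness by arguing the cascade admits no alternative at any step.

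\textbf{Cases (ii) and (iii), irregular $\pi$.} Here $\pi$ is all $R$'s or all $L$'s; by symmetry assume all $R$'s, so person $i$ prefers $i+1$ for every $i$, there are no natural pairs, and every person is preferred (person $i$ is preferred by $i-1$). Condition 1 is vacuous, and condition 2 says every person is matched---so by \Cref{lem:stable-conditions} a matching is stable if and only if it is a perfect matching (which automatically satisfies condition 3). A perfect matching of the cycle $C_n$ exists if and only if $n$ is even, giving (ii) directly; and when it exists, being a perfect matching is exactly the statement that no person is unmatched. For (iii), when $n$ is odd no perfect matching of $C_n$ exists, so no matching can satisfy condition 2, hence none is stable.

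The main obstacle I anticipate is making the cascade argument in Case (i) fully rigorous: I need to argue cleanly that conditions 2 and 3 force a unique propagation of matches along each inter-pair arc, including the careful bookkeeping of parity (which determines whether the middle seat of an arc is left unmatched) and the wraparound behavior of the cyclic string. The irregular cases reduce almost immediately to the classical fact about perfect matchings of $C_n$, so the real work is confined to the regular case.
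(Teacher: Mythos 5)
Your proposal is correct and follows essentially the same route as the paper: the irregular cases (ii) and (iii) are handled exactly as in the paper's proof, by noting via \Cref{lem:stable-conditions} that with no natural pairs and everyone preferred, stability is equivalent to a perfect matching of the cycle, which exists iff $n$ is even. Your cascade argument for the regular case (i) is precisely the argument the paper formalizes in \Cref{lem:unmatched-conditions}, where the arcs between consecutive natural pairs (of the form $L^aR^b$) are split into zones and the matches are forced to propagate inward from both ends, leaving at most one middle person unmatched.
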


\begin{proof}
Part (i), which is more involved than the other cases, is a corollary of the subsequent lemma (\Cref{lem:unmatched-conditions}), in which we show something more specific. Therefore, we focus on showing (ii) and (iii).

To show part (ii), consider $\pi\in \Pi_n$ such that $\pi$ is irregular and $n$ is even. Since $\pi$ is irregular, there are no natural pairs. Therefore, \Cref{lem:stable-conditions} implies that a matching $\mu$ is stable under $\pi$ if and only if all preferred people are matched to someone and no two neighbors are both unmatched. Since every person is preferred when $\pi$ is irregular, we see that a matching is stable if and only if no person is unmatched.

To show part (iii), consider $\pi\in \Pi_n$ such that $\pi$ is irregular and $n$ is odd. Again, since $\pi$ is irregular, there are no natural pairs, so (by \Cref{lem:stable-conditions}) $\mu$ is stable under $\pi$ if and only if all preferred people are matched and no two neighbors are both unmatched. Since there are an odd number of people, it is impossible for every person to be matched. It follows that no stable matching exists.
\end{proof}

\Cref{prop:stable} may be considered interesting on it's own, as it implies that there is almost always a unique stable matching. In general, the number of stable matchings can be large. \cite{pittel1989average} showed that in a one-to-one two-sided matching market with $n$ participants on each side, each with uniformly random preferences, that the expected number of stable matchings is roughly $e^{-1}n\log n.$ By restricting preference lists according to the cycle graph structure, \Cref{prop:stable} implies that the expected number of stable matchings is approximately $1$. This is in agreement with a main result of \cite{immorlica2003marriage}, which, while not implying \Cref{prop:stable}, shows that matching markets with restricted preference lists on one side admit a small number of stable matchings.

\paragraph{} To complete the proof of \Cref{prop:stable}, we turn to the case when $\pi$ is regular, which is more involved, and will form the bulk of our analysis. The following definitions will be useful. For a person $i$ and preferences $\pi$, define $s_\pi(i)$ to be the number of seats to the left of $i$ until someone prefers their right neighbor. Similarly, define $t_\pi(i)$ to be the number of seats to the right of $i$ until someone prefers their left neighbor. We will generally drop the subscript $\pi$ when the preferences are clear. Note that for a preferred person, either $s(i)=1$ or $t(i)=1$, since if their left neighbor prefers them, then $s(i)=1$ and if their right neighbor prefers them then $t(i)=1$.

We may now show that when $\pi$ is regular, there exists a unique stable matching, showing \Cref{prop:stable}(i). The following lemma further shows two key properties of this unique stable matching, which we will use to show our main results, Theorems 1 and 2.

\begin{lemma}\label{lem:unmatched-conditions}
For regular preferences $\pi\in \Pi_n$, there is a unique stable matching $\mu_\pi$, and
\begin{enumerate}
    \item[(i)] person $i$ is unmatched in $\mu_\pi$ if and only if both $s(i)$ and $t(i)$ are even,
    \item[(ii)] no person is unmatched in $\mu_\pi$ if and only if there is an even number of seats between any two adjacent natural pairs.
\end{enumerate}
\end{lemma}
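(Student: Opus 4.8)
The plan is to pin down the unique stable matching explicitly through the run structure of $\pi$ and then read off both parts from the parities of $s(i)$ and $t(i)$. Write $\pi$ cyclically as a sequence of maximal blocks of equal characters, $R^{a_1}L^{b_1}R^{a_2}L^{b_2}\cdots R^{a_m}L^{b_m}$; regularity guarantees $m\ge 1$ and every $a_k,b_k\ge 1$. Each $R\to L$ boundary (the last $R$ of a block together with the first $L$ of the next) is a natural pair, and each $L\to R$ boundary is a pair of neighbors who point away from each other. By \Cref{lem:stable-conditions} every natural pair is matched in any stable matching, so I first argue the rest of the matching is forced. Starting from a matched natural-pair endpoint and moving into the adjoining block, each successive person is \emph{preferred} by the neighbor just behind them while their own preferred neighbor is already taken, so \Cref{lem:stable-conditions}(2) forces them to take the remaining neighbor; this pairs the block up two at a time. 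The only freedom left is at an $L\to R$ boundary, where the two outer people (the last $L$ of one block and the first $R$ of the next) are preferred by nobody: if the forced pairing leaves both of them over, \Cref{lem:stable-conditions}(3) forces them to match each other, which is stable since neither prefers the other; otherwise at most one is left over, and it must be unmatched. This determines $\mu_\pi$ uniquely and establishes existence, so it also proves \Cref{prop:stable}(i).

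With the matching in hand, note that the only people who can be unmatched are those preferred by neither neighbor, i.e.\ a person $i$ with $i-1$ preferring $L$ and $i+1$ preferring $R$; equivalently $i$ is an outer person at an $L\to R$ boundary (the last $L$ or first $R$ of a block). The clean bridge to $s$ and $t$ is the observation that $s(i)$ equals one plus the number of consecutive $L$'s immediately to the left of $i$, and $t(i)$ equals one plus the number of consecutive $R$'s immediately to the right of $i$ (the extra one accounts for the step onto the first $R$, resp.\ $L$, we encounter). A short case check on whether $i$ is the last $L$ or the first $R$ of its block then shows that $s(i)$ is even exactly when the forced propagation on the left side fails to reach $i$, and $t(i)$ is even exactly when the propagation on the right side fails to reach $i$; $i$ survives both propagations—and is therefore unmatched—iff both $s(i)$ and $t(i)$ are even. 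Preferred people always have $s(i)=1$ or $t(i)=1$, so they are never both even and never unmatched, which is consistent. This proves (i).

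For (ii), observe that unmatched people occur only strictly between two adjacent natural pairs, and I will show each such gap contains an unmatched person iff it contains an odd number of seats. The gap between the natural pair ending $R^{a_j}L^{b_j}$ and the next consists of the remaining $b_j-1$ copies of $L$ and the first $a_{j+1}-1$ copies of $R$, i.e.\ $N_j=a_{j+1}+b_j-2$ seats; the forced two-at-a-time pairing propagating inward from the two bounding natural pairs covers all of them exactly when $N_j$ is even, and when $N_j$ is odd it strands exactly one seat at the $L\to R$ boundary. Ranging over all gaps, no person is unmatched iff every $N_j$ is even, i.e.\ iff there is an even number of seats between every pair of adjacent natural pairs.

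I expect the main obstacle to be the bookkeeping at the $L\to R$ boundaries, where the two forced pairings meet. One must verify carefully that the ``both outer people left over'' configuration is genuinely resolved by matching them to each other, and that this is exactly the configuration in which neither $s(i)$ nor $t(i)$ is even—so that the tempting but wrong guess ``an even-length block strands its outer person'' gets corrected into the sharp statement that \emph{both} $s(i)$ and $t(i)$ must be even. Getting the parity translation between block lengths and the counts defining $s(i),t(i)$ exactly right, uniformly across the last-$L$ and first-$R$ cases, is the crux of the argument.
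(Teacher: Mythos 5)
Your proposal is correct and takes essentially the same approach as the paper: both force the matching outward from each natural pair using condition 2 of \Cref{lem:stable-conditions}, resolve the leftover people where the two propagations meet (your $L\to R$ boundaries, the paper's ``zone 2'') using condition 3, and then translate run-length parities into the parities of $s(i)$ and $t(i)$, with your gap count $N_j=a_{j+1}+b_j-2$ matching the paper's zone decomposition exactly. One minor directional slip: inside a block, each person whose match is forced is preferred by the neighbor just \emph{ahead} of them in the direction of propagation (the next person deeper in the block), not the one behind them---but this does not affect the substance of the forcing argument.
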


\begin{proof}
We will use \Cref{lem:stable-conditions} to exactly characterize the unique stable matching, and show (i) and (ii) in the process. By \Cref{lem:stable-conditions}, we know that neighbors that form a natural pair must be matched in any stable matching. With this in mind, we now determine what happens between two adjacent natural pairs. Consider a substring of $\pi'$ of $\pi$ that consists of the people in between two adjacent natural pairs. (If there is only one natural pair, then there is exactly one such substring, consisting of those people not in the natural pair.) For example, one such substring is the underlined portion of
\begin{equation}
LRL\underline{LLR}RLL.
\end{equation}
Notice that any such substring $\pi'$ must have the following format: a (possibly empty) string of $L$'s, followed by a (possibly empty) string of $R$'s. Now suppose $\pi'$ consist of the people strictly between person $a$ and person $b$. We then break up the people in $\pi'$ between the two natural pairs into three zones:
\begin{itemize}
     \item Zone 1 begins on the left side and contains the maximum even number of $L$'s following the starting natural pair ($RL$). So zone $1$ contains the people $a+1, a+2, \cdots, a+2p$ for some $p\ge 0$ even.
     \item Zone $3$ begins on the right side and contains the maximum even number of $R$'s preceding the ending natural pair ($RL$). So zone $3$ contains the people $b-1, b-2, \cdots, b-2q$ for some $q\ge 0$ even.
     \item Zone $2$ contains the remaining people in between zones $1$ and $3$. Zone $2$ thus consists of $LR$, $L$, $R$, or is empty. (If there were at least two $L$'s or $R$'s in zone $2$, then they would be incorporated into zone $1$ or $3$.)
\end{itemize}
In the example above, zone $1$ consists of $LL$, zone $3$ consists of $R$, and zone $2$ is empty. We now show that $\mu$ is stable if and only if for all $\pi'$,
\begin{itemize}
    \item $\mu(a+k)=a+k+1$ for all $i<p$ odd (so everyone in zone $1$ is matched to a neighbor also in zone $1$);
    \item $\mu(b-k)=b-k-1$ for all $i<q$ odd (so everyone in zone $3$ is matched to a neighbor also in zone $3$);
    \item $\mu(i)=i+1$ if zone $2$ contains two people, $i$ and $i+1$;
    \item and $\mu(i)=i$ if zone $2$ contains only one person, $i$.
\end{itemize}
Notice that this statement implies a unique stable matching, since it specifies where each person must be matched. It is easy to see the if direction, by observing that the matching satisfies the conditions in \Cref{lem:stable-conditions}. We now show the only if direction. Suppose that a matching $\mu$ satisfies the conditions in \Cref{lem:stable-conditions}. Then observe that $a+1$ and $a+2$ must be matched in any stable matching since $a+1$ must be matched (by condition $2$, because they are preferred by $a+2$), and cannot be matched with person $a$ since $(a, a-1)$ is a natural pair. It follows that $a+3$ must be matched with $a+4$ and so forth. Similarly, $b-1$ and $b-2$ must be matched, as well as $b-3$ and $b-4$ and so forth. This shows that each person in zone $1$ and zone $3$ is matched to another person in their respective zone. It remains to consider zone $2$. If zone $2$ consists of two people, $i$ and $i+1$, then at least one must be matched by condition $3$. Suppose that $i$ is matched. Then $i$ must be matched to $i+1$ since $i-1=a+2p$ is in zone $1$ and is thus matched to $a+2p-1.$ Similarly, we can show that if $i+1$ is matched, then $i+1$ must be matched to $i$. Therefore, if zone $2$ contains two neighbors, then they must be matched to each other. Finally, if zone $2$ contains just one person, then that person is unmatched since both of their neighbors ($a+2p$ and $b-2q$) are matched to others.

Taking account, we observe that no person is unmatched if and only if zone $2$ is empty or contains two people. Since zones $1$ and $3$ contain an even number of people by definition, this implies part (ii), that no person is unmatched in the unique stable matching if and only if there is an even number of people between any two adjacent natural pairs.

To show part (i), notice that every person $i$ in zone $1$, zone $3$, or who is part of a natural pair is matched. Of these people, every person is certainly preferred except perhaps the right-most person in zone $1$ and the left-most person in zone $3$. If $i$ is preferred, then $s(i)=1$ or $t(i)=1$. If $i$ is the right-most person in zone $1$, then they are by definition an odd number of people away from the first $R$ on their left (noting that there is an even number of people in zone $1$), so $s(i)$ is odd. Similarly, if $i$ is the left-most person in zone $3$, then $i$ is an odd number of people away from the first $L$ on their right, so $t(i)$ is odd. So for people who are in zone $1$, zone $3$, or who are in natural pairs, either $s(i)$ or $t(i)$ is odd. Now consider the people in zone $2$. Consider when zone $2$ contains two people $i$ and $i+1$, represented by $LR$. Both are matched in this case. Now observe that the person $i$ is an odd number of people from the first $L$ on their right, since there is one $R$ to their right in zone $2$ followed by an even number of $R$'s in zone $3$. So $t(i)$ is odd. Similarly, person $i+1$ is an odd number of people from the first $R$ on their left, so $s(i+1)$ is odd. When zone $2$ only consists of one person, person $i$, then that person is unmatched. In this case, both $s(i)$ and $t(i)$ are even: on their left, there is an even number of $L$'s in zone $1$ followed by an $L$ and then an $R$, and on their right, there is an even number of $R$'s in zone $3$ followed by an $R$ and then an $L$. Thus, we have shown that person $i$ is unmatched if and only if $s(i)$ and $t(i)$ are both even.
\end{proof}

\section{Probability of being unmatched}\label{sec:prob-unmatched}
We may now consider the likelihood that person $i$ is unmatched at a table where people have random preferences---i.e., where each person uniformly at random prefers either their left or right neighbor. In other words, for $\pi$ drawn uniformly at random from $\Pi_n$, what is the probability that $i$ is unmatched? To answer this, we will use the stable matching theory developed above in \Cref{prop:stable}. If $\pi$ is regular, there is a unique stable matching $\mu_\pi$, so it is natural to suppose that $i$ is unmatched in this case if and only if $\mu_\pi(i)=i.$ If $\pi$ is irregular and $n$ is even, then both possible stable matchings are perfect (everybody is matched), so we suppose $i$ is always matched in this case. Finally, if $\pi$ is irregular and $n$ is odd, then there does not exist any stable matching, so we separately specify what happens; for simplicity, we suppose that in this case, $i$ is unmatched with probability $\frac{1}{n}$ (that there is one person randomly left out). In this way, by considering preferences $\pi\in \Pi_n$ uniformly at random, the probability $f(n)$ that person $i$ is unmatched at a table with $n$ people is well-defined.

We may now show \Cref{thm1}, exactly computing $f(n)$ for $n\ge 1$.

\paragraph{Proof of \Cref{thm1}.}
We first consider the case when $n=2m$ is even, computing the probability that $i$ is unmatched. \Cref{prop:stable}(ii) implies that $i$ is never unmatched when $\pi$ is irregular and $n$ is even. Therefore, we determine the probability that $i$ is unmatched and $\pi$ is regular. Recall from \Cref{lem:unmatched-conditions}(i) that if $\pi$ is regular, then $i$ is unmatched if and only if $s_\pi(i)$ and $t_\pi(i)$ are both even, where we recall that $s_\pi(i)$ is the number of seats to the left of $i$ until the first occurrence of an $R$, and $t_\pi(i)$ is the number of seats to the right of $i$ until the first occurrence of an $L$.

We compute the likelihood of regular preferences $\pi$ such that $s_\pi(i)$ and $t_\pi(i)$ are both even. To do this, we first consider the number of preferences $\pi$ such that $s_\pi(i)=2c$ and $t_\pi(i)=2d.$ There exists such $\pi$ if and only if $c,d\ge 1$ and $c+d\le m-1.$ Taking such $c$ and $d$, the probability that a randomly drawn $\pi\in \Pi_n$ satisfies $s_\pi(i)=2c$ and $t_\pi(i)=2d$ is
\begin{equation}
    \left(\frac{1}{2}\right)^{2c} \left(\frac{1}{2}\right)^{2d},
\end{equation}
where the probability comes from observing that the $c-1$ characters to the left of $i$ must all be $L$'s and the $c$-th character to the left of $i$ must be $R$, the $d-1$ characters to the right of $i$ must all be $R$'s and the $c$-th character to the right of $i$ must be $L$. So the total probability that $\pi$ is regular and $i$ is unmatched is
\begin{align}
    \sum_{\substack{c,d\ge 1 \\ c+d\le m-1}} \left(\frac{1}{2}\right)^{2c} \left(\frac{1}{2}\right)^{2d} &= \sum_{c=1}^{m-2} \left(\frac{1}{2}\right)^{2c} \sum_{d=1}^{m-1-c} \left(\frac{1}{2}\right)^{2d}\\
    &= \frac{1}{9} - \left(\frac{1}{4}\right)^{m}\left(\frac{4m}{3} - \frac{8}{9}\right),\label{eq:orange}
\end{align}
where the calculation of the summation is shown in the appendix (\Cref{prop:thm1-even}). Substituting $m=n/2$, we have that
\begin{equation}
    f(n) = \frac{1}{9} - \left(\frac{1}{2}\right)^n\left(\frac{2n}{3} - \frac{8}{9}\right),
\end{equation} 
as desired.

We leave the case of odd $n$ to the appendix. The proof is very similar, though a couple additional cases need to be considered separately.

\section{Probability everybody is matched}\label{sec:prob-no-unmatched}
We now consider the probability that no person is unmatched when preferences $\pi\in \Pi_n$ are chosen uniformly at random. First observe that when $n$ is odd, this is impossible, so the probability is $0$. So we focus on when $n$ is even. In this case, from \Cref{prop:stable}, if $\pi$ is regular, then there exists a unique stable matching, so we suppose that every person is matched if and only if this unique stable matching is perfect. Moreover, when $\pi$ is irregular, both stable matchings are perfect, so we assume that everyone is matched in this case. Therefore, the probability $g(n)$ that every person is matched when preferences are chosen uniformly at random is well defined.

\paragraph{Proof of \Cref{thm2}.}
We count the total number of preferences that result in a perfect stable matching $\pi$. The two irregular preferences (all $R$'s and all $L$'s) both result in perfect stable matchings.

Recall from \Cref{lem:unmatched-conditions}(ii) that regular preferences $\pi$ induce a stable matching that is perfect if and only if there is an even number of people between any two adjacent natural pairs. We say that such a string $\pi$ is \textit{even-spaced}. For example, the $10$-character strings
\begin{align*}
    & R\underline{RL} LL\underline{RL}RRR \\
    & \underline{L}\underline{RL} LR\underline{RL} LR\underline{R}
\end{align*}
are both even-spaced, and thus both correspond to perfect stable matchings. (We have underlined natural pairs $RL$.) Notice that in the second string, there is an occurrence of $RL$ that wraps from the end of the string to the beginning.

Now we count the number of even-spaced regular strings $\pi$. A regular string is even-spaced if and only if either (1) all occurrences of $RL$ are in positions $(i, i+1)$ for $i$ odd, or (2) all occurrences of $RL$ are in positions $(i, i+1)$ for $i$ even. To count the number of regular strings satisfying condition (1), consider the choices of the characters occupying the pairs $(0,1), (2,3), \cdots, (n-2,n-1).$ These can each by occupied by $RR, LR,$ and $LL$, but not by $RL$. Therefore, there are $3^{n/2}-2$ regular strings $\pi$ satisfying condition (1), where we accounted for the $2$ irregular strings that also satisfy the condition. Similarly, there are $3^{n/2}-2$ regular strings $\pi$ that satisfy condition (2). Note that there are no regular strings that satisfy both conditions (since there must exist an occurrence of $RL$ that either starts in an odd position or even position).

In total, there are $2 + (3^{n/2}-2) + (3^{n/2}-2) = 2\cdot 3^{n/2} - 2$ preferences that result in perfect stable matchings. Therefore, the probability $g(n)$ that randomly chosen preferences $\pi$ result in a perfect stable matching is
\begin{equation}
    \frac{2\cdot 3^{n/2} - 2}{2^n} = \frac{3^{n/2}-1}{2^{n-1}},
\end{equation}
as desired.

\section{Comparison to random matching}\label{sec:random-matching}

One might imagine an alternate way in which conversations are determined at a table, where conversations randomly arise and are held fixed, rather than settling into equilibrium. (Even if you would prefer to turn away to speak with your neighbor, you might be stuck in your current conversation due to social conventions.) It is natural to consider differences between this ``random matching'' process and stable matching.

We provide the analogue of \Cref{thm1} in this setting, giving the probability that a given individual is unmatched under the ``randomized greedy matching'' procedure studied by \cite{dyer1991randomized}, in which matches are sequentially chosen uniformly at random from the set of remaining possible matches.

For example, consider a table with six people. Then, the procedure first selects some pair of neighbors to match. This leaves four remaining people, arranged in a line. There are three remaining possible matches. If the ``middle'' match is chosen, then this results in two unmatched people. If either of the ``outside'' matches are chosen, then there remains two more people, who are neighbors and thus also match. (This implies a $2/3$ chance that no person is unmatched in a table of six.)

Interestingly, the following result, which gives the probability an individual is unmatched under randomized greedy matching, follows directly from a result of \cite{flory1939intramolecular} motivated by particular reactions involving polymers (see page 2).\footnote{Flory won the 1974 Nobel Prize in Chemistry.} (Flory was interested in chemical reactions that required adjacent subunits. During a reaction involving a polymer with long chain length, some subunits would become isolated, and therefore not complete the reaction.)

\begin{proposition}[Corollary of \cite{flory1939intramolecular}]\label{prop:random1}
    The probability $f^*(n)$ that a given person is unmatched in a table with $n$ people under random matching is
    \begin{align}
        f^*(n) &= \sum_{k=0}^{n-3} \frac{(-2)^{k}(n-2-k)}{k!}.
    \end{align}
\end{proposition}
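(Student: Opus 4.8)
The plan is to derive the proposition as a corollary of Flory's chain analysis, by reducing the cyclic problem to a linear one and then solving the resulting recurrence in closed form. Write $f^*(n)$ for the quantity in the proposition. The first match produced by the randomized greedy procedure is, by the rotational symmetry of $C_n$, an arbitrary edge; deleting its two endpoints leaves the remaining $n-2$ people arranged along a \emph{path} $P_{n-2}$, after which the procedure continues as randomized greedy matching on that path (the available matches are exactly the edges of $P_{n-2}$). The first step is therefore to reduce the cycle to the line, establishing $f^*(n)=E(n-2)$, where $E(m)$ denotes the corresponding quantity for a line of $m$ people (Flory's chain of reacting subunits); the degenerate cases $n\le 2$ are checked directly.

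On the path $P_m$ I would set up Flory's recurrence by conditioning on the first dimer. If it occupies one of the $m-1$ bonds, say bond $i$ chosen uniformly at random, it splits $P_m$ into the two sub-paths $P_{i-1}$ and $P_{m-1-i}$, whose subsequent evolutions are independent. Averaging over $i$ gives
\begin{equation}
(m-1)\,E(m) = 2\sum_{j=0}^{m-2} E(j), \qquad E(0)=0,\quad E(1)=1.
\end{equation}
Subtracting the instance for $m-1$ from that for $m$ clears the sum and yields the three-term recurrence
\begin{equation}
(m-1)\,E(m) = (m-2)\,E(m-1) + 2\,E(m-2), \qquad m\ge 2.
\end{equation}

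To extract a closed form I would pass to the ordinary generating function $G(x)=\sum_{m\ge 1}E(m)\,x^m$. Using $xG'(x)-G(x)=\sum_m (m-1)E(m)x^m$ together with the analogous shifts for the other two terms, the three-term recurrence translates into the separable first-order ODE $x(1-x)\,G'(x) = (1-x+2x^2)\,G(x)$. Solving by separation of variables (the logarithmic derivative decomposes as $-2+\tfrac1x+\tfrac{2}{1-x}$) and fixing the constant by $E(1)=1$ gives
\begin{equation}
G(x) = \frac{x\,e^{-2x}}{(1-x)^2}.
\end{equation}
Reading off the coefficient via the Cauchy product of $e^{-2x}=\sum_k \tfrac{(-2)^k}{k!}x^k$ with $(1-x)^{-2}=\sum_j (j+1)x^j$ then yields $E(m)=[x^m]G(x)=\sum_{k=0}^{m-1}\tfrac{(-2)^k}{k!}(m-k)$, and substituting $m=n-2$ reproduces exactly the advertised expression for $f^*(n)$.

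The main obstacle is the closed-form solution of the recurrence: the cycle-to-line reduction and the derivation of the three-term recurrence are routine, but correctly converting that recurrence into the ODE for $G$ — in particular verifying that the $m=0,1$ boundary terms vanish so the translation is exact — and then matching the extracted coefficients to the stated sum is where the real work sits. A generating-function-free alternative would be to read the closed form off small cases and verify by induction that $\sum_{k=0}^{m-1}\tfrac{(-2)^k}{k!}(m-k)$ satisfies the three-term recurrence with the correct initial values; this trades the ODE for a short but delicate index-shifting identity.
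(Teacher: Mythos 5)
Your derivation is correct and reproduces the displayed formula, but it does genuinely more work than the paper: the paper's entire proof consists of the cycle-to-line reduction plus a citation to \cite{flory1939intramolecular} for the line case, whereas you re-derive the line result from scratch. Both you and the paper observe that deleting the first (uniformly random) match turns randomized greedy on $C_n$ into randomized greedy on $P_{n-2}$; the paper then stops and quotes Flory, while you set up the splitting recurrence $(m-1)E(m)=2\sum_{j=0}^{m-2}E(j)$, difference it to the three-term recurrence, translate that into the ODE $x(1-x)G'(x)=(1-x+2x^2)G(x)$, and solve to get $G(x)=xe^{-2x}/(1-x)^2$. I checked the details: the boundary terms in the translation do vanish (since $E(0)=0$ and the factor $m-1$ kills the $m=1$ term), the partial fraction decomposition $-2+\tfrac1x+\tfrac{2}{1-x}$ is right, the coefficient extraction gives $E(m)=\sum_{k=0}^{m-1}\tfrac{(-2)^k}{k!}(m-k)$, and the small cases $E(1)=1$, $E(2)=0$, $E(3)=1$, $E(4)=2/3$ all agree with direct computation ($E(4)=2/3$ matches the paper's six-person example). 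So your route buys self-containedness---a complete proof where the paper has only a pointer---at the cost of length; one small point worth a sentence in your write-up is that the ``independent evolutions'' claim is only needed in the weak form that greedy matching on a disjoint union restricts, componentwise, to greedy matching on each component, which is what licenses adding expectations over the two sub-paths.

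One caveat, which you share with the paper itself: the quantity your recurrence actually computes is the expected \emph{number} of unmatched people on $P_m$, not a per-person probability. (Adding contributions from the two sub-paths is a statement about counts; on a path the per-person probability is position-dependent, so ``the corresponding quantity'' for a line is only well-defined as a count, and your own initial condition $E(3)=1$ forced by the recurrence is not a probability.) Consequently $E(n-2)=\sum_{k=0}^{n-3}\tfrac{(-2)^k(n-2-k)}{k!}$ is the expected number of unmatched people at the table, and the probability that a \emph{given} person is unmatched is $E(n-2)/n$ by the rotational symmetry of the cycle. Note that the displayed sum grows like $ne^{-2}$, so it is the divided-by-$n$ version that is consistent with the paper's own claim that $f^*(n)\to 1/e^2$, and with its six-person example (per-person probability $\tfrac{1}{9}$, expected count $\tfrac{2}{3}$). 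This off-by-$n$ conflation sits in the proposition as printed, so your proof faithfully establishes the stated formula; just be aware that your sentence ``establishing $f^*(n)=E(n-2)$'' silently inherits the paper's mismatch between the words of the proposition and its formula, and a clean write-up should say explicitly which of the two quantities is being computed.
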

In particular, the result implies that
\begin{equation}
    \lim_{n\rightarrow\infty} f^*(n) = \frac{1}{e^2},
\end{equation}
which is slightly higher than the limiting probability that an individual is unmatched under stable matching,
$\lim_{n\rightarrow \infty} f(n) = 1/9.$ In other words, randomized greedy matching results in more unmatched individuals on average. Flory's result directly corresponds to a setting in which $n$ people are seated along a line (rather than a circle). \Cref{prop:random1} follows after observing that after a single match is chosen on a circle of size $n$, the problem reduces to a line of size $n-2$.

\section{Conclusion}\label{sec:conclusion}
We considered stable matching on a cycle graph with uniformly random preferences, with the goal of better understanding a particular circumstance: being seated at a table in which both of your neighbors are speaking to someone else. For tables of size $n\ge 1$, we determined exact probabilities for (1) the probability that an individual is unmatched, and (2) the probability that no individual is unmatched. In particular, \Cref{thm1} implies that the probability that an individual is unmatched approaches $1/9$ when $n$ is large. As our result supposes that preferences are selected uniformly at random, one might compare the estimate of $1/9$ with personal experience to assess the innocuousness of this assumption (and the strength of one's social skills). It would be interesting to generalize the current setup to other graph structures or preference distributions. It would also be interesting to consider solution concepts that allows for conversations with more than two participants.

{
\bibliographystyle{alpha}
\bibliography{bib}

\begin{thebibliography}{16}
\providecommand{\natexlab}[1]{#1}
\providecommand{\url}[1]{\texttt{#1}}
\expandafter\ifx\csname urlstyle\endcsname\relax
  \providecommand{\doi}[1]{doi: #1}\else
  \providecommand{\doi}{doi: \begingroup \urlstyle{rm}\Url}\fi

\bibitem[Abdulkadiro{\u{g}}lu et~al.(2009)Abdulkadiro{\u{g}}lu, Pathak, and Roth]{abdulkadirouglu2009strategy}
A.~Abdulkadiro{\u{g}}lu, P.~A. Pathak, and A.~E. Roth.
\newblock Strategy-proofness versus efficiency in matching with indifferences: Redesigning the nyc high school match.
\newblock \emph{American Economic Review}, 99\penalty0 (5):\penalty0 1954--1978, 2009.

\bibitem[Arnosti(2023)]{arnosti2023lottery}
N.~Arnosti.
\newblock Lottery design for school choice.
\newblock \emph{Management Science}, 69\penalty0 (1):\penalty0 244--259, 2023.

\bibitem[Ashlagi et~al.(2017)Ashlagi, Kanoria, and Leshno]{ashlagi2017unbalanced}
I.~Ashlagi, Y.~Kanoria, and J.~D. Leshno.
\newblock Unbalanced random matching markets: The stark effect of competition.
\newblock \emph{Journal of Political Economy}, 125\penalty0 (1):\penalty0 69--98, 2017.

\bibitem[Baritompa et~al.(2005)Baritompa, L{\"o}wen, Polster, and Ross]{baritompa2005mathematical}
B.~Baritompa, R.~L{\"o}wen, B.~Polster, and M.~Ross.
\newblock Mathematical table turning revisited.
\newblock \emph{arXiv preprint math/0511490}, 2005.

\bibitem[Beras et~al.(2023)Beras, Guo, Jiang, and James]{beras2023thanksgiving}
E.~Beras, J.~Guo, J.~Jiang, and S.~James.
\newblock A very {Planet Money Thanksgiving}.
\newblock \emph{Planet Money}, 2023.
\newblock \url{https://www.npr.org/2023/11/22/1197954683/thanksgiving-economics-turkey-prices-seating}.

\bibitem[Berriaud et~al.(2023)Berriaud, Constantinescu, and Wattenhofer]{berriaud2023stable}
D.~Berriaud, A.~Constantinescu, and R.~Wattenhofer.
\newblock Stable dinner party seating arrangements.
\newblock In \emph{International Conference on Web and Internet Economics}, pages 3--20. Springer, 2023.

\bibitem[Bodlaender et~al.(2020)Bodlaender, Hanaka, Jaffke, Ono, Otachi, and van~der Zanden]{bodlaender2020hedonic}
H.~L. Bodlaender, T.~Hanaka, L.~Jaffke, H.~Ono, Y.~Otachi, and T.~C. van~der Zanden.
\newblock Hedonic seat arrangement problems.
\newblock \emph{arXiv preprint arXiv:2002.10898}, 2020.

\bibitem[Ceylan et~al.(2023)Ceylan, Chen, and Roy]{ceylan2023optimal}
E.~Ceylan, J.~Chen, and S.~Roy.
\newblock Optimal seat arrangement: What are the hard and easy cases?
\newblock \emph{arXiv preprint arXiv:2305.10381}, 2023.

\bibitem[De~Haan et~al.(2023)De~Haan, Gautier, Oosterbeek, and Van~der Klaauw]{de2023performance}
M.~De~Haan, P.~A. Gautier, H.~Oosterbeek, and B.~Van~der Klaauw.
\newblock The performance of school assignment mechanisms in practice.
\newblock \emph{Journal of Political Economy}, 131\penalty0 (2):\penalty0 388--455, 2023.

\bibitem[Dyer and Frieze(1991)]{dyer1991randomized}
M.~Dyer and A.~Frieze.
\newblock Randomized greedy matching.
\newblock \emph{Random Structures \& Algorithms}, 2\penalty0 (1):\penalty0 29--45, 1991.

\bibitem[Dyer et~al.(1993)Dyer, Frieze, and Pittel]{dyer1993average}
M.~Dyer, A.~Frieze, and B.~Pittel.
\newblock The average performance of the greedy matching algorithm.
\newblock \emph{The Annals of Applied Probability}, pages 526--552, 1993.

\bibitem[Flory(1939)]{flory1939intramolecular}
P.~J. Flory.
\newblock Intramolecular reaction between neighboring substituents of vinyl polymers.
\newblock \emph{Journal of the American Chemical Society}, 61\penalty0 (6):\penalty0 1518--1521, 1939.

\bibitem[Gale and Shapley(1962)]{gale1962college}
D.~Gale and L.~S. Shapley.
\newblock College admissions and the stability of marriage.
\newblock \emph{The American Mathematical Monthly}, 69\penalty0 (1):\penalty0 9--15, 1962.

\bibitem[Immorlica and Mahdian(2003)]{immorlica2003marriage}
N.~Immorlica and M.~Mahdian.
\newblock Marriage, honesty, and stability.
\newblock 2003.

\bibitem[Martin(2007)]{martin2007stability}
A.~Martin.
\newblock On the stability of four-legged tables.
\newblock \emph{Physics Letters A}, 360\penalty0 (4-5):\penalty0 495--500, 2007.

\bibitem[Pittel(1989)]{pittel1989average}
B.~Pittel.
\newblock The average number of stable matchings.
\newblock \emph{SIAM Journal on Discrete Mathematics}, 2\penalty0 (4):\penalty0 530--549, 1989.

\end{thebibliography}
}

\appendix

\section{Proof of Theorem 1 (Supplement)}

We first state and prove the identify in \Cref{eq:orange}.

\begin{proposition}\label{prop:thm1-even}
\begin{equation}
    \sum_{c=1}^{m-2} \left(\frac{1}{2}\right)^{2c} \sum_{d=1}^{m-1-c} \left(\frac{1}{2}\right)^{2d}
    = \frac{1}{9} - \left(\frac{1}{4}\right)^{m}\left(\frac{4m}{3} - \frac{8}{9}\right).
\end{equation}
\end{proposition}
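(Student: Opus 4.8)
The plan is to recognize the double sum as an iterated geometric series and evaluate it directly, collecting the result into a constant part and a part proportional to $\left(\tfrac14\right)^m$. To reduce notational clutter I would set $x = \tfrac14$, so that the claim becomes $\sum_{c=1}^{m-2} x^{c}\sum_{d=1}^{m-1-c} x^{d} = \tfrac19 - x^{m}\!\left(\tfrac{4m}{3} - \tfrac89\right)$, and I would keep in mind the useful relations $1-x = \tfrac34$ and $x^{m-1} = 4x^{m}$.

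First I would evaluate the inner sum by the finite geometric series formula,
\begin{equation}
    \sum_{d=1}^{m-1-c} x^{d} = \frac{x - x^{m-c}}{1-x} = \frac{4}{3}\left(\frac14 - x^{m-c}\right).
\end{equation}
Multiplying by $x^{c}$ and using the key simplification $x^{c}\cdot x^{m-c} = x^{m}$ (which is independent of $c$) gives a summand of the form $\tfrac{4}{3}\bigl(\tfrac14 x^{c} - x^{m}\bigr)$. The outer sum over $c = 1,\dots,m-2$ then splits cleanly into a geometric piece $\sum_{c=1}^{m-2} x^{c}$ and a constant piece $x^{m}$ repeated $m-2$ times:
\begin{equation}
    \frac43\left(\frac14\sum_{c=1}^{m-2} x^{c} - (m-2)\,x^{m}\right).
\end{equation}

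Next I would evaluate the remaining geometric sum as $\sum_{c=1}^{m-2} x^{c} = \tfrac43\bigl(\tfrac14 - x^{m-1}\bigr) = \tfrac13 - \tfrac43 x^{m-1}$, and then substitute $x^{m-1} = 4x^{m}$ to express everything in terms of $x^{m}$. Collecting terms, the $c$-independent contribution should produce $\tfrac43\cdot\tfrac1{12} = \tfrac19$, while the coefficient of $x^{m}$ should assemble to $\tfrac43\bigl(\tfrac43 + m - 2\bigr) = \tfrac{4m}{3} - \tfrac89$, yielding exactly the claimed right-hand side.

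Since every step is a geometric-series evaluation, there is no genuine conceptual obstacle; the main risk is purely bookkeeping, namely keeping the various powers of $x$ aligned when merging $x^{m-1}$ into $x^{m}$ and making sure the coefficient $\tfrac43 + m - 2 = m - \tfrac23$ is handled correctly. The only other point I would check is the edge behavior: the outer sum is empty for $m \in \{1,2\}$, so I would verify separately that the right-hand side also vanishes there (it does, since $\tfrac19 - \tfrac14\cdot\tfrac49 = 0$ and $\tfrac19 - \tfrac1{16}\cdot\tfrac{16}{9} = 0$), confirming the identity holds for all $m \ge 1$ rather than just $m \ge 3$.
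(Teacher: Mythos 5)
Your proof is correct and follows essentially the same route as the paper's: evaluate the inner geometric sum, observe that the cross term $x^{c}\cdot x^{m-c}=x^{m}$ is independent of $c$ (the paper's $(1/4)^{m-1}$ term), then sum the remaining geometric series and collect coefficients. The only additions are cosmetic (the substitution $x=\tfrac14$) and the welcome but inessential verification that both sides vanish for $m\in\{1,2\}$, which the paper omits.
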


\begin{proof}
First observe that
    \begin{align}
        \sum_{d=1}^{m-1-c} \left(\frac{1}{2}\right)^{2d}
        = \sum_{d=1}^{m-1-c} \left(\frac{1}{4}\right)^{d}
        &= \frac{\frac{1}{4} - (\frac{1}{4})^{m-c}}{1 - \frac{1}{4}}\\
        &= \frac{1}{3}\left(1 - \left(\frac{1}{4}\right)^{m-1-c}\right)\\
        &= \frac{1}{3} - \frac{1}{3}\left(\frac{1}{4}\right)^{m-1-c}.
    \end{align}
    Therefore,
    \begin{align}
        \sum_{c=1}^{m-2} \left(\frac{1}{2}\right)^{2c} \sum_{d=1}^{m-1-c} \left(\frac{1}{2}\right)^{2d}
        &= \sum_{c=1}^{m-2} \left(\frac{1}{4}\right)^{c}\left(\frac{1}{3} - \frac{1}{3}\left(\frac{1}{4}\right)^{m-1-c}\right)\\
        &= \frac{1}{3}\sum_{c=1}^{m-2} \left(\frac{1}{4}\right)^{c} - \frac{1}{3}\sum_{c=1}^{m-2}\left(\frac{1}{4}\right)^{m-1}\\
        &= \frac{1}{3}\cdot \frac{\frac{1}{4} - \left(\frac{1}{4}\right)^{m-1}}{1 - \frac{1}{4}} - \frac{m-2}{3}\left(\frac{1}{4}\right)^{m-1}\\
        &= \frac{1}{3}\cdot \frac{1}{3}\left(1 - \left(\frac{1}{4}\right)^{m-2}\right) - \frac{m-2}{3}\left(\frac{1}{4}\right)^{m-1}\\
        &= \frac{1}{9} - \frac{1}{9}\left(\frac{1}{4}\right)^{m-2} - \frac{m-2}{3}\left(\frac{1}{4}\right)^{m-1}\\
        &= \frac{1}{9} - \left(\frac{1}{4}\right)^{m}\left(\frac{4m}{3} - \frac{8}{9}\right).
    \end{align}
\end{proof}

We now prove \Cref{thm1} for $n$ odd. Consider when $n=2m-1$ is odd. We again compute the likelihood of regular preferences $\pi$ such that $s_\pi(i)$ and $t_\pi(i)$ are both even. To do this, we first consider the number of $\pi$ such that $s_\pi(i)=2c$ and $t_\pi(i)=2d.$ Now, there exists such $\pi$ if and only if $c,d\ge 1$ and $c+d\le m.$ Taking such $c$ and $d$ such that $c+d\le m-1$, the probability that a randomly drawn $\pi\in \Pi_n$ satisfies $s_\pi(i)=2c$ and $t_\pi(i)=2d$ is
\begin{equation}
    \left(\frac{1}{2}\right)^{2c} \left(\frac{1}{2}\right)^{2d},
\end{equation}
for the same reason as before. However, when $c+d=m$ (corresponding to $m-1$ possible choices of $c$ and $d$), the probability is $\left(\frac{1}{2}\right)^{2m-2}$; this separate calculation is needed since the ``paths'' to the first $R$ and first $L$ overlap. So the total probability that $\pi$ is regular and $i$ is unmatched is
\begin{align}
    \sum_{\substack{c,d\ge 1 \\ c+d\le m}} \left(\frac{1}{2}\right)^{2c} \left(\frac{1}{2}\right)^{2d} &= \sum_{c=1}^{m-2} \left(\frac{1}{2}\right)^{2c} \sum_{d=1}^{m-1-c} \left(\frac{1}{2}\right)^{2d} + (m-1)\left(\frac{1}{2}\right)^{2m-2}\\
    &= \frac{1}{9} + \left(\frac{1}{2}\right)^n\left(\frac{2n}{3} - \frac{8}{9}\right),
\end{align}
as desired, where we may again use (\Cref{prop:thm1-even}).

When $\pi$ is irregular, the probability that $i$ is unmatched is $\frac{1}{n}.$ So the probability that $\pi$ is irregular and $i$ is unmatched is
\begin{equation}
    \left(\frac{1}{2}\right)^{2m-2}\left(\frac{1}{n}\right).
\end{equation}
So in total, the probability $f(n)$ that $i$ is unmatched when $n=2m-1$ is
\begin{equation}
    \frac{1}{9} + \left(\frac{1}{2}\right)^n\left(\frac{2n}{3} - \frac{8}{9} + \frac{2}{n}\right),
\end{equation}
as desired.

\end{document}